\newtheorem{lemma}{\bf Lemma}[section]
\newtheorem{defi}[lemma]{\bf Definition}
\newtheorem{thm}[lemma]{\bf Theorem}
\newcommand{\GL}{{\operatorname{GL}}}
\newcommand{\SL}{{\operatorname{SL}}}
\newcommand{\PGL}{{\operatorname{PGL}}}
\newcommand{\PSL}{{\operatorname{PSL}}}
\newcommand{\GU}{{\operatorname{GU}}}
\newcommand{\SU}{{\operatorname{SU}}}
\newcommand{\PGU}{{\operatorname{PGU}}}
\newcommand{\PSU}{{\operatorname{PSU}}}
\newcommand{\OO}{{\operatorname{P\Omega}}}
\DeclareMathOperator{\Sym}{Sym}
\DeclareMathOperator{\Aut}{Aut}
\DeclareMathOperator{\Out}{Out}
\DeclareMathOperator{\soc}{soc}
\DeclareMathOperator{\Sz}{Sz}
\DeclareMathOperator{\Ree}{Ree}
\DeclareMathOperator{\STS}{STS}
\DeclareMathOperator{\KTS}{KTS}
\title[]{The structure of $3$-pyramidal groups}
\author{Xiaofang Gao} 
\address{Xiaofang Gao. Departamento de Matem\'atica, Universidade de Bras\'ilia, Campus 
Universit\'ario \\ Darcy Ribeiro, Bras\'ilia-DF, 70910-900, Brazil. \newline
ORCID:  https://orcid.org/0000-0001-8106-7941}
\email{gaoxiaofang2020@hotmail.com}
\author{Martino Garonzi}
\address{Martino Garonzi. Departamento de Matem\'atica, Universidade de Bras\'ilia, Campus 
Universit\'ario Darcy Ribeiro, Bras\'ilia-DF, 70910-900, Brazil. \newline
ORCID: https://orcid.org/0000-0003-0041-3131}
\email{martino@mat.unb.br, mgaronzi@gmail.com}
\thanks{The first author acknowledges the support of the CAPES PhD
fellowship and the NSF of China - Grant number 12161035. 
The second author acknowledges the support of Conselho Nacional 
de Desenvolvimento Cient\'ifico e Tecnol\'ogico (CNPq), Universal
- Grant number 402934/2021-0.}
\date{}
\keywords{Primitive group, Finite group, Solvable group, Kirkman Triple System}
\begin{document}

\setlength{\parskip}{2mm}

\maketitle

\begin{abstract}
A combinatorial block design $D$ is called $3$-pyramidal if there exists a subgroup $G$ of $\Aut(D)$ fixing $3$ points and acting regularly on the other points. If this happens, we say that the design is $3$-pyramidal under $G$. In case $D$ is a Kirkman triple system, it is known that such a group $G$ has precisely $3$ involutions, all conjugate to each other. In this paper, we obtain a classification of the groups with this property.
\end{abstract}

\section{Introduction}

 A Steiner triple system of order $v$, briefly $\STS(v)$, is a pair $(V,B)$ where $V$ is
a set of $v$ points and $B$ is a set of $3$-subsets (blocks) of $V$ with the property that any two
distinct points are contained in exactly one block. A Kirkman triple system of order $v$, briefly
$\KTS(v)$, is an $\STS(v)$ together with a resolution $R$ of its block-set $B$, that is a partition of $B$
into classes (parallel classes) each of which is, in its turn, a partition of the point-set $V$.  It is known that a $\KTS(v)$ exists if and only if $v\equiv3 \mod 6$. These structures fall into the broader category of combinatorial designs. A combinatorial design $D$ is said to be $3$-pyramidal if there exists a subgroup of $\Aut(D)$ which fixes $3$ points and acts sharply transitively on all the other points.  Bonvicini, Buratti,  Garonzi,  Rinaldi and 
Traetta \cite{SMG} gave a necessary condition for the existence of a $3$-pyramidal $\KTS(v)$, provided some difference methods to construct $3$-pyramidal Kirkman Triple Systems and observed that each
group having a $3$-pyramidal action on a Kirkman Triple System must have exactly three involutions, and
these involutions are pairwise conjugate (see \cite[Theorem 3.8]{SMG}). In this paper, we are concerned with studying groups with this property. 

All groups considered in this paper will be finite. An involution is an element of order $2$.

\begin{defi}[3-pyramidal group]
We say that a finite group $G$ is $3$-pyramidal if it has exactly $3$ involutions, which are all conjugate to each other.
\end{defi}

In \cite{SMG}, $3$-pyramidal groups are called pertinent. Easy examples of $3$-pyramidal groups are the symmetric group $S_3$ and the alternating group $A_4$. Observe that a $3$-pyramidal group is necessarily nonabelian of order divisible by $6$, since it contains involutions and it acts transitively on a set of size $3$. Moreover, if $G$ is $3$-pyramidal and $H$ is any group of odd order, then $G \times H$ is also $3$-pyramidal.

There is literature about finite solvable groups in which all the involutions are conjugate, see for example \cite[Section 8 of Chapter IX]{HB}. The Sylow $2$-subgroups of such groups were classified by Shaw \cite{shaw}, they are cyclic, generalized quaternion (in case there is only one involution), homocyclic (direct products of isomorphic cyclic groups) or Suzuki $2$-groups, i.e. $2$-groups $P$ that admit a solvable subgroup of $\Aut(P)$ whose action on the involutions of $P$ is transitive. We refer to \cite[Section 7 of Chapter VIII]{HB} for the structure of Suzuki $2$-groups.

Our main theorem is the following.

%\textcolor{red}{LET $\pi=\{2,3,5\}$. IS THE HALL $\pi'$-SUBGROUP NORMAL?}

%\textcolor{blue}{Hall $\pi'$-subgroup is not necessary normal. For example, let $H$ be any group of odd order with the property that the Hall $\{3,5\}'$-subgroup is not normal in $H$. Let $G=S_3\times H$. In this case the Hall $\pi'$-subgroup is not normal in $G$.}

\begin{thm} \label{main}
Let $G$ be a finite group and $O(G)$ the largest normal subgroup of $G$ of odd order. Let $K$ be the subgroup generated by the involutions of $G$. Then $G$ is $3$-pyramidal if and only if one of the following holds.

\begin{enumerate}
\item $G$ is isomorphic to $S_3\times H$ where $H$ is a group of odd order.
\item $O(G) \leqslant C_G(K)$ and $G/O(G)$ is isomorphic to $N \rtimes A$ where $N$ is the Suzuki $2$-group of order $64$ and $A$ is a subgroup of $\Aut(N)$ of order $3$ or $15$.
\item $O(G) \leqslant C_G(K)$ and $G/O(G)$ is isomorphic to $(C_{2^n} \times C_{2^n}) \rtimes A$ where $A$ is the cyclic group of order $3$ generated by the automorphism $(a,b) \mapsto (b,(ab)^{-1})$.
\end{enumerate}
In item (1) $K \cong S_3$ while in items (2), (3) $K \cong C_2 \times C_2$.
\end{thm}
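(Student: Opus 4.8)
The plan is to split the argument according to the isomorphism type of the subgroup $K$ generated by the involutions, and then to localise at its centraliser. First we pin down $K$: the three involutions form a single conjugacy class, so $K\trianglelefteq G$; for an involution $t$ we have $N_G(\langle t\rangle)=C_G(t)$, hence $[G:C_G(t)]=3$. For distinct involutions $s,t$ the dihedral group $\langle s,t\rangle$ of order $2m$ has $m$ involutions when $m$ is odd and $m+1$ when $m$ is even, so having only three involutions forces $m\in\{2,3\}$: if $m=3$ then $\langle s,t\rangle\cong S_3$ already contains all three involutions, so $K\cong S_3$; if $m=2$ then $st$ is the third involution and $K=\langle s,t\rangle\cong C_2\times C_2$. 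This settles the last sentence of the theorem. If $K\cong S_3$ we are quickly done: $C_G(K)$ has odd order (it contains no involution), $K\cap C_G(K)=1$, and the conjugation map $G\to\Aut(K)\cong S_3$ is surjective (its image contains $\operatorname{Inn}(K)$) with kernel $C_G(K)$, so $G=K\times C_G(K)$, which is case (1); the converse is immediate.

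Assume henceforth $K\cong C_2\times C_2$; then $K\leqslant C_G(K)$ and $G/C_G(K)$ is a transitive subgroup of $\Aut(K)\cong S_3$, of order $3$ or $6$. We then remove $O(G)$ by induction on $|G|$. Suppose $O(G)\neq1$ and let $N$ be a minimal normal subgroup of $G$ inside $O(G)$; by the Feit--Thompson theorem $N$ is an elementary abelian $p$-group for some odd prime $p$, and as an $\mathbb F_pK$-module it decomposes into eigenspaces for the characters $K\to\{\pm1\}$. If $K$ acted nontrivially on $N$, then for $1\neq r$ in a nontrivial eigenspace and an involution $t\in K$ inverting it, $rt$ would be an involution outside $K$ (as $(rt)^2=rr^{-1}=1$ and $K\cap N=1$), a fourth involution --- contradiction. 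So $[N,K]=1$. Then $G/N$ is again $3$-pyramidal of smaller order with $\overline K\cong C_2\times C_2$: its involutions lift bijectively to those of $G$ because $|N|$ is odd and distinct involutions of $G$ generate a Klein four-group (so their product would be an involution in $N$). By induction $G/N$ is of type (1), (2) or (3); since $\overline K\cong C_2\times C_2$ it is not of type (1), so it is of type (2) or (3), and in particular $[O(G),K]\leqslant N$; coprime action of $K$ on $O(G)$ then yields $[O(G),K]=[O(G),K,K]\leqslant[N,K]=1$, i.e.\ $O(G)\leqslant C_G(K)$, while $G/O(G)\cong(G/N)/O(G/N)$ carries the structure required in (2) or (3). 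This reduces the theorem to the case $O(G)=1$, $K\cong C_2\times C_2$.

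For this remaining case, note that since any two involutions commute a Sylow $2$-subgroup $P$ has $\Omega_1(P)=K$, hence exactly three involutions. We proceed: (i) show $G$ is solvable; (ii) then by Shaw's classification \cite{shaw} --- cyclic and generalised quaternion being excluded --- $P$ is homocyclic or a Suzuki $2$-group, and exactly three involutions force $P\cong C_{2^n}\times C_{2^n}$ or $P$ a Suzuki $2$-group with (by Higman, \cite[Chapter~VIII, \S7]{HB}) $Z(P)=\Phi(P)=\Omega_1(P)\cong C_2\times C_2$; (iii) check that $O(G)=1$ forces $O_2(G)=P$, so $G=P\rtimes A$ with $|A|$ odd and $3\mid|A|$ by transitivity on the involutions; (iv) classify the pairs $(P,A)$. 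In the homocyclic case $A\hookrightarrow\GL_2(\mathbb Z/2^n\mathbb Z)$, whose odd part has order $3$, so $|A|=3$, and the order-$3$ action is conjugate to the companion matrix of $x^2+x+1$, i.e.\ $(a,b)\mapsto(b,(ab)^{-1})$ --- case (3). In the Suzuki case $|P|\in\{|Z(P)|^2,|Z(P)|^3\}=\{16,64\}$, and $16$ is impossible because then $P'$ would be characteristic of order $2$ (there is no special $2$-group of order $16$ with centre $C_2\times C_2$, an alternating form on a $2$-dimensional $\mathbb F_2$-space having image of dimension at most $1$), contradicting conjugacy of the three involutions; thus $P$ is the Suzuki $2$-group of order $64$, and a computation of $\Aut(P)$ gives $|A|\in\{3,15\}$ --- case (2). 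The converse --- that the groups in (1), (2), (3) are $3$-pyramidal --- is then routine, using $O(G)\leqslant C_G(K)$ to see that the involutions of $G$ are precisely those of $K$. The main obstacle is step (i), excluding non-abelian composition factors: finite simple groups have many involutions, and the quasisimple groups with few (notably $\SL_2(q)$) carry a distinguished central involution that cannot be fused to the other two, and making this rigorous inside a group with exactly three pairwise conjugate involutions is the heart of the proof; a secondary difficulty is the $2$-group bookkeeping of steps (ii)--(iv), in particular isolating the unique Suzuki $2$-group of order $64$ and computing its automorphism group.
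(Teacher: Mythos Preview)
Your outline is correct and follows essentially the same architecture as the paper: pin down $K$, dispose of the $K\cong S_3$ case directly, reduce modulo $O(G)$, prove solvability, and then use the known structure of solvable groups with a single class of involutions to classify $G/O(G)$. Your reduction to $O(G)=1$ via the coprime-action identity $[O(G),K]=[O(G),K,K]$ is a clean variant of the paper's Lemma~\ref{propertiesKTS}(ii), and your exclusion of the order-$16$ Suzuki case by the alternating-form argument is a nice alternative to the paper's computer check.

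Two points where the paper diverges from your sketch deserve mention. First, for step~(i) the paper does \emph{not} argue via quasisimple covers with a central involution as you suggest; instead it takes a minimal nonsolvable counterexample $W$, shows that $W/\Phi(W)$ is a primitive group of type~2 with socle $S^m$ and cyclic $3$-group quotient, and then eliminates each possible simple $S$ using the Li--Zhang tables of primitive groups with solvable point stabilisers together with the Aschbacher--Kleidman--Liebeck maximal subgroup data. Your heuristic about $\SL_2(q)$ does not obviously globalise to this situation, so if you pursue your own route you will need a genuinely different argument. Second, for step~(iii) citing Shaw alone is not enough: Shaw classifies the Sylow $2$-subgroup but does not give $2$-length~$1$. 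The paper invokes Thompson's theorem \cite[Chapter~IX, Theorem~8.6]{HB}, which yields both the $2$-length statement (hence $P\trianglelefteq G$ once $O(G)=1$) and the homocyclic/Suzuki dichotomy simultaneously; you should replace the Shaw citation by this.
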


By the Feit-Thompson theorem, all finite groups of odd order are solvable. Therefore, the above theorem implies that all $3$-pyramidal groups are solvable. The quotients $G/O(G)$ in item (2) are SmallGroup(192,1025), SmallGroup(960, 5748).

We can construct infinite families of $3$-pyramidal groups as follows: let $Y$ be a group of odd order with a normal subgroup $X$ of index $3$ and let $N := C_{2^n} \times C_{2^n}$. The group $Y$ acts on $N$ as an automorphism of order $3$ as in item (3) of the statement of Theorem \ref{main}, by composition $Y \to Y/X \cong C_3 \to \Aut(N)$. The semidirect product $G:=N \rtimes Y$ is $3$-pyramidal and $O(G)=X$. Also, note that there exist $3$-pyramidal groups whose Sylow $2$-subgroups are not normal. An example is SmallGroup(1296,2705).

Our paper is organized as follows. In Section \ref{lemmas} we give some definitions and lemmas. In Section \ref{KTSsolvable}
we prove that $3$-pyramidal groups are solvable using the structure of primitive groups and the classification of finite simple groups. In Section \ref{proofmain} we finish the proof of Theorem \ref{main} with the help of a theorem of Thompson and the structure of Suzuki $2$-groups.

\section{Preliminary results} \label{lemmas}

The following lemma summarizes some properties of $3$-pyramidal groups.
\begin{lemma} \label{propertiesKTS}
%\label{2.1}
Let $G$ be a $3$-pyramidal group and write $|G|=2^n \cdot d$ with $d$ odd. Let $H$ be a subgroup of $G$, let $K$ be the subgroup of $G$ generated by the $3$ involutions and $C:=C_G(K)$.
\begin{enumerate}[label=(\roman*)]
\item If $H$ has even order and $HC = G$, then $H$ is $3$-pyramidal.
%\item If $|H|$ is even and $H$ contains a Sylow $3$-subgroup of $G$, then $H$ is $3$-pyramidal. \textcolor{red}{I'M NOT SURE WE NEED THIS}{\textcolor{blue}{(Maybe we do not this, we did not use this property)}}
%\item If $Q$ is a Sylow $p$-subgroup of $C$ and $H = N_G(Q)$, then $H$ is $3$-pyramidal. \textcolor{red}{I'M NOT SURE WE NEED THIS}{\textcolor{blue}{(Maybe we do not this, we did not use this property)}}
\item Suppose the involutions of $G$ commute pairwise. If $H$ is normal in $G$ and $|H|$ is odd
then $G/H$ is a $3$-pyramidal group.
\item If $n=1$ then $K \cong S_3$ and $G \cong C \times K$.
\item If $n \geqslant 2$ then $n$ is even, $K \cong C_2 \times C_2$ and $|G:C|=3$.
\item If $H$ is normal in $G$ and $|H|=2^m$ then $m$ is even.
\end{enumerate}
\end{lemma}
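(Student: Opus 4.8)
The plan is to reduce everything to the structure of $K$ and of $C:=C_G(K)$. First note that any two of the three involutions generate a dihedral subgroup, which (having at most three involutions) is isomorphic to $C_2\times C_2$ or $S_3$ and already contains the third involution; hence $K\cong C_2\times C_2$ or $K\cong S_3$. Since the three involutions form a single conjugacy class, $K\trianglelefteq G$, so $C\trianglelefteq G$, and the conjugation action gives an embedding of $G/C$ into $\Aut(K)\cong S_3$ whose image is transitive on $\{t_1,t_2,t_3\}$; thus $|G:C|\in\{3,6\}$ (the values $1,2$ being incompatible with transitivity).

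If $K\cong S_3$, then $\Aut(S_3)$ is inner, $K\cap C=Z(K)=1$ and $KC/C\cong S_3=\Aut(K)$, forcing $G=KC=K\times C$; as any involution of $C$ would lie in $K\cap C=1$, $C$ has odd order and $|G|_2=|K|_2=2$, i.e.\ $n=1$. Conversely $n=1$ forces $|K|_2\leqslant2$, excluding $K\cong C_2\times C_2$, so $K\cong S_3$; this proves (iii) and shows $n\geqslant2\implies K\cong C_2\times C_2$. Assume now $n\geqslant2$, so $K\cong C_2\times C_2$ and (being abelian) $K\leqslant C$. To finish (iv) I would produce a Sylow $2$-subgroup $P$ with $K\leqslant Z(P)$: then $P\leqslant C$, so $|G:C|$ is odd, hence $=3$. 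Choosing $P$ with a central involution $t_1\in Z(P)$, one first rules out that $P$ has a unique involution: if $P$ were cyclic, Burnside's normal $2$-complement theorem together with a short computation inside the induced dihedral group would give $K\cong S_3$; if $P$ were generalized quaternion, the Brauer--Suzuki theorem would put $t_1$ in $Z^*(G)$, forcing $t_1,t_2,t_3$ equal modulo $O(G)$ and hence equal (as $K\cap O(G)=1$) --- both impossible. So $\Omega_1(P)=K$, and it remains to promote $t_1\in Z(P)$ to $K\leqslant Z(P)$, equivalently to rule out $G/C\cong S_3$. This is the step I expect to be the main obstacle: if $G/C\cong S_3$ then $t_1$ is the unique involution of $Z(P)$ and $N_G(P)\leqslant C_G(t_1)$, so the $G$-fusion of $t_1$ with $t_2$ cannot be controlled in $N_G(P)$; I would seek a contradiction by analysing $C_G(t_1)$ (an index-$3$ subgroup with a central involution) together with the fusion of the $t_i$, descending if necessary to the normal subgroup $D$ with $D/C\cong C_3$ and to $C$ itself.

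Granting $K\leqslant Z(P)$, the parity of $n$ follows from a coprime fixed-point-free action argument: by Burnside's fusion theorem the three involutions, all lying in $Z(P)$, are $N_G(P)$-conjugate, so $N_G(P)$ acts on $K=\Omega_1(Z(P))$ inducing a transitive subgroup of $S_3$; pick a $3$-element $y\in N_G(P)$ inducing a $3$-cycle. Then $\langle y\rangle$ acts coprimely on $P$ and fixes no involution (a fixed involution would lie in $\Omega_1(P)=K$, on which $y$ is fixed-point-free), so $C_P(y)=1$, whence $|P|=2^n\equiv1\pmod 3$ and $n$ is even. Part (v) is the same argument applied to a normal $2$-subgroup $H$ (which lies in every Sylow $2$-subgroup, hence in $P$): $\Omega_1(H)\leqslant\Omega_1(P)=K$ is $G$-invariant, so $\Omega_1(H)\in\{1,K\}$; if it is $1$ then $H=1$, and if $K\leqslant H$ then $y$ normalises $H$, acts on it coprimely and without fixed points, so $|H|=2^m\equiv1\pmod 3$ and $m$ is even (and when $n=1$ a normal subgroup of order $2$ would be central, contradicting that its involution has three conjugates).

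Finally, (i) and (ii) are elementary. For (i): the three involutions lie in $K$ and are centralised by $C$, so writing a $G$-conjugation between two of them as $hc$ with $h\in H$, $c\in C$ shows $H$ already acts transitively on them; since $|H|$ is even, $H$ then contains exactly the three involutions of $G$, all $H$-conjugate, so $H$ is $3$-pyramidal. For (ii): here $K\cong C_2\times C_2$ and, $|H|$ being odd, $K\cap H=1$, so $K$ maps isomorphically into $G/H$; a representative $x$ of any involution of $G/H$ satisfies $x^2\in H$ and has even order, its $2$-part is one of $t_1,t_2,t_3$ and its odd part lies in $\langle x^2\rangle\leqslant H$, so every involution of $G/H$ equals some $t_iH$; the three cosets $t_iH$ are distinct (pairwise products lie in $K\setminus\{1\}$, not in $H$) and pairwise conjugate, so $G/H$ is $3$-pyramidal.
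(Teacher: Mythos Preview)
Your direct arguments for (i), (ii), (iii), and (v) are correct and considerably more explicit than the paper's, which simply cites \cite[Section~4]{SMG} for almost everything and only spells out (iii). Your reduction of $K$ to either $S_3$ or $C_2\times C_2$ via the dihedral subgroup generated by two involutions is clean, and the Burnside-fusion plus fixed-point-free orbit count for ``$n$ even'' is exactly the right mechanism once $K\leqslant Z(P)$ is known.

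There is, however, a genuine gap in (iv), and you correctly flag it yourself: ruling out $G/C\cong S_3$ when $n\geqslant 2$ (equivalently, promoting $t_1\in Z(P)$ to $K\leqslant Z(P)$). You outline a strategy but do not carry it out, and this step is not routine. Your suggested descent to the index-$2$ subgroup $D$ with $D/C\cong C_3$ does show, by induction, that $n-1$ is even, hence $n$ is odd; but this by itself is not a contradiction, since at this point you have not yet established that $n$ is even for $G$. One also has to exclude, for each odd $n\geqslant 3$, the existence of a Sylow $2$-subgroup $P$ with exactly three involutions in which some element acts on $K$ as a transposition; for $n=3$ this is easy (no group of order $8$ works), but a uniform argument requires more. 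The paper does not fill this gap either---it defers entirely to \cite{SMG}---so your identification of this as ``the main obstacle'' is accurate.

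Two minor remarks. First, your detour through cyclic and generalized quaternion Sylow $2$-subgroups is unnecessary: since $K\trianglelefteq G$ is a $2$-group, $K\leqslant P$ automatically, so $\Omega_1(P)=K$ is immediate from the fact that $G$ has exactly three involutions. Second, your proof of (v) as written chains through the element $y\in N_G(P)$ produced in (iv), but (v) does not actually depend on the gap: since $H\trianglelefteq G$, any $3$-element $y\in G$ whose image in $G/C$ has order $3$ (one exists because $3$ divides $|G:C|$) normalizes $H$ and acts fixed-point-freely on it, giving $|H|\equiv 1\pmod 3$ directly.
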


\begin{proof}
Almost everything is part of \cite[Section 4]{SMG} (where $3$-pyramidal groups are called pertinent) including the fact that $G/C \cong S_3$ if $n=1$. In particular, the fact that if $n \geqslant 2$ then $n$ is even is the content of \cite[Theorem 4.6]{SMG}. The only thing we need to explain here is the fact that, if $n=1$, then $K \cong S_3$ and $G \cong C \times K$. Let $i,j,k$ be the three involutions of $G$. Note that, if $n=1$, then $ij \neq ji$, since two commuting involutions generate a subgroup of order $4$. So $i^j \neq i$ and of course $i^j \neq j$ being $i \neq j$, therefore $i^j = k$. The same argument shows that $j^i=k$, therefore $ij$ has order $3$ and $K \cong S_3$. Since $C \cap K = Z(K) = \{1\}$ and $G/C \cong S_3$, we have $|CK| = |C| \cdot |K| = |G|$ hence $CK = G$ and, since $C,K$ are normal in $G$, it follows that $G \cong C \times K$.
\end{proof}

The above lemma implies in particular that a $3$-pyramidal group generated by involutions must be isomorphic to $S_3$.

We include a result about almost-simple groups that will be useful in the proof. An almost-simple group with socle $S$ is a group $X$ which admits a unique minimal normal subgroup, call it $S$, which is nonabelian and simple. If this is the case, then $C_X(S)=\{1\}$ hence we may identify $X$ with a subgroup of $\Aut(S)$. The maximal subgroup $H$ of $S$ is $X$-ordinary if its $X$-class equals its $S$-class, in other words for every $x\in X$ there exists $s\in S$ such that $H^x=H^s$. Note that, if $S \leqslant X \leqslant Y \leqslant \Aut(S)$ and $H$ is a $Y$-ordinary maximal subgroup of $S$, then $H$ is also $X$-ordinary. We say that $H$ is ordinary if it is $\Aut(S)$-ordinary, so that it is $X$-ordinary for every almost-simple group $X$ with socle $S$.

The $\Aut(S)$-class of $H$
is a disjoint union of $S$-classes: there exist $\varphi_1, \ldots, \varphi_c \in \Aut(S)$
such that the subgroups $H^{\varphi_i}\leqslant S$ are pairwise not conjugate in $S$ and for
every $\varphi \in \Aut(S)$ there exist $s\in S$, $i\in\{1, \cdots, c\}$ such that
$H^\varphi=H^{\varphi_i s}$. Let $C_i = \{H^{\varphi_i s} : s\in S\}$ be the $S$-class of $H^{\varphi_i}$, for $i=1, \ldots, c$. 
Then $\Aut(S)$ acts transitively on the set $\{C_i\ :\ i= 1, \ldots, c\}$ by sending $(C_i, \varphi)$ to the
$S$-class of $H^{\varphi_i\varphi}$. This corresponds to a homomorphism $\pi: \Aut(S) \to \Sym(c)$. Note that $H$ is $\Aut(S)$-ordinary if and only if $c=1$.

\begin{lemma} \label{X-ord}
Let $X$ be an almost-simple group with socle $S$, $H$ a maximal
subgroup of $S$ and $\pi$ the map defined above. 
\begin{enumerate}
\item If $X \leqslant \ker(\pi)$ then $H$ is $X$-ordinary.
\item If $X$ is normal in $\Aut(S)$ and $H$ is $X$-ordinary, then $X \leqslant \ker(\pi)$.
\item If $c=2$ and $X$ does not have $C_2$ as a quotient, then $H$ is $X$-ordinary.
\item If $H$ is $X$-ordinary, then $N_X(H)$ is a maximal subgroup of $X$ and the intersection $N_X(H)\cap S$ is equal to $H$.
\end{enumerate}
\end{lemma}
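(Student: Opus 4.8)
The plan is to phrase everything through the permutation action of $\Aut(S)$ on $\mathcal{C}=\{C_1,\dots,C_c\}$ afforded by $\pi$, after choosing $\varphi_1$ to be the identity automorphism so that $C_1$ is the $S$-class of $H$ itself. The single elementary observation to record at the outset is that \emph{$H$ is $X$-ordinary if and only if $X$ fixes $C_1$ under the $\pi$-action}, i.e.\ $H^x\in C_1$ for all $x\in X$; this is just unwinding the definition, since $H^x\in C_1$ says precisely that $H^x=H^s$ for some $s\in S$. (Note also that $S\leqslant\ker(\pi)$, because $H^{\varphi_i s}$ and $H^{\varphi_i}$ are always $S$-conjugate.) Part (1) is then immediate: if $X\leqslant\ker(\pi)$ then $X$ fixes every $C_i$, in particular $C_1$, so $H$ is $X$-ordinary.

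For part (2) I would assume $X\trianglelefteq\Aut(S)$ and that $X$ fixes $C_1$, and use transitivity of $\Aut(S)$ on $\mathcal{C}$ to pick $g_i\in\Aut(S)$ with $C_1^{g_i}=C_i$. For $x\in X$, normality gives $g_i x g_i^{-1}\in X$, which fixes $C_1$, hence $C_i^x=C_1^{g_i x}=\bigl(C_1^{g_i x g_i^{-1}}\bigr)^{g_i}=C_1^{g_i}=C_i$; so $X$ fixes all $C_i$ and $X\leqslant\ker(\pi)$. For part (3): $c=2$ means $\pi$ takes values in $\Sym(2)\cong C_2$, so the image of $X$ under $\pi$ is a quotient of $X$ sitting inside $C_2$; by hypothesis it is not $C_2$, hence trivial, so $X\leqslant\ker(\pi)$ and part (1) applies.

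For part (4), the equality $N_X(H)\cap S=N_S(H)=H$ uses nothing about $X$: a maximal subgroup of a nonabelian simple group is proper, nontrivial, and self-normalizing. For maximality, take any $L$ with $N_X(H)\leqslant L\leqslant X$ and intersect with $S$: since $H=N_X(H)\cap S\leqslant L\cap S\leqslant S$ and $H$ is maximal in $S$, either $L\cap S=H$ or $L\cap S=S$. If $L\cap S=H$, then every $l\in L$ satisfies $H^l\leqslant (L\cap S)^l=L\cap S=H$ (using $S\trianglelefteq X$ and $L^l=L$), so $H^l=H$, giving $L\leqslant N_X(H)$ and $L=N_X(H)$. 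If $L\cap S=S$, i.e.\ $S\leqslant L$, then $X$-ordinariness provides for each $x\in X$ an $s\in S\leqslant L$ with $H^x=H^s$, whence $xs^{-1}\in N_X(H)\leqslant L$ and $x\in L$; thus $L=X$. Finally $N_X(H)\neq X$, since $N_X(H)=X$ would force $H\trianglelefteq X$ and hence $H\trianglelefteq S$, a contradiction; so $N_X(H)$ is maximal in $X$. None of these steps is a genuine obstacle: the only places needing care are getting the reformulation in the first paragraph exactly right and keeping track of conjugates in the arguments for (2) and (4).
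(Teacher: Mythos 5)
Your proof is correct and follows essentially the same route as the paper: (1)--(3) are the same arguments (restated via the action on the classes $C_i$), and (4) uses the same ingredients ($N_X(H)S=X$ from $X$-ordinariness, maximality of $H$ in the simple group $S$). The only difference is that in (4) you argue directly that any $L$ with $N_X(H)\leqslant L\leqslant X$ equals $N_X(H)$ or $X$ by intersecting with $S$ and using $N_S(H)=H$, which is a slightly more streamlined variant of the paper's argument via an auxiliary maximal subgroup $M$ with $M\cap S=H$.
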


\begin{proof}
Item 1. Assume $X \leqslant \ker(\pi)$ and let $x \in X$. Then $H^x$ belongs to the $S$-class of $H$,
so there is some $s \in S$ with $H^x=H^s$. This shows that $H$ is $X$-ordinary. 

Item 2. Assume that $X$ is normal in $\Aut(S)$, that $H$ is $X$-ordinary and
let $x \in X$. Then there is some $s\in S$ with $H^x=H^s$. If $i\in \{1, \ldots, c\}$ then, since
$\varphi_i x {\varphi_i}^{-1}\in X$, being $X \unlhd \Aut(S)$, there is some $s \in S$ with
$H^{\varphi_i x {\varphi_i}^{-1}}=H^s$, therefore $H^{\varphi_i x} = H^{s \varphi_i} = H^{\varphi_i {\varphi_i}^{-1} s \varphi_i}$
belongs to the $S$-class of $H^{\varphi_i}$ since $S \unlhd \Aut(S)$.

Item 3. Let $N:=\ker(\pi)$. Then
$$X/X \cap N\cong XN/N \leqslant \Aut(S)/N\cong \mbox{Im}(\pi) \leqslant \Sym(c).$$
In particular, if $c=2$ then the index $|X:X\cap N|$ is $1$ or $2$. So if $c=2$ and $X$ does not
have $C_2$ as a quotient then $|X:X\cap N|=1$, i.e. $X \leqslant N = \ker(\pi)$. This implies that $H$ is $X$-ordinary by item (1).

Item 4. Since $H$ is $X$-ordinary, for every $x\in X$, there is $s\in S$ such that
$H^x=H^s$, hence $H^{xs^{-1}}=H$,
and hence $xs^{-1}\in N_X(H)$. Therefore $N_X(H)S = X$.
Since $H$ is not normal in $X$, $X$ has a maximal
subgroup $M$ with $N_X(H)\leqslant M<X$, so that $MS=X$. Since $M\cap S \unlhd M$, the maximality
of $M$ in $X$ implies that $N_X(M\cap S)$ is equal to one of $M,X$.
Since $\{1\} \neq H \leqslant M \cap S < S$ and $S$ is a simple group, $M \cap S$ cannot be
normal in $X$, so $N_X(M\cap S) = M$. Since $H \leqslant M \cap S < S$, the maximality of $H$ 
in $S$ implies that $M \cap S =H$.
\end{proof}

\begin{lemma} \label{G0=S}
Let $X$ be any almost-simple group with socle $S$. Assume that $X/S$ is a (possibly trivial) cyclic $3$-group and that every maximal subgroup of $X$ not containing $S$ is solvable. Let $R$ be a maximal subgroup of $X$ such that $RS=X$. Then $R \cap S$ is a maximal subgroup of $S$.
\end{lemma}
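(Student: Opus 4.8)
The plan is to show that $L:=R\cap S$ is a maximal subgroup of $S$, by ruling out the ``novelty'' situation in which $L$ lies below several maximal subgroups of $S$. If $X=S$ the statement is trivial (any maximal subgroup $R$ of $S$ has $R\cap S=R$), so assume $X\neq S$. Since $RS=X$ and $R$ is a proper subgroup of $X$, we have $S\not\leqslant R$, so $R$ does not contain $S$ and is therefore solvable by hypothesis; as $S\unlhd X$, the subgroup $L$ is normal in $R$ and $R/L\cong RS/S=X/S$ is a non-trivial cyclic $3$-group, of order $3^a$ with $a\geqslant 1$. Also $L\neq S$. Assume for contradiction that $L$ is not maximal in $S$, and let $\mathcal M$ be the set of maximal subgroups of $S$ containing $L$; it is nonempty because $L<S$, and under our assumption each $M\in\mathcal M$ strictly contains $L$. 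Since $L\unlhd R$ and every element of $L$ normalises every $M\in\mathcal M$ (because $L\leqslant M=N_S(M)$, a maximal subgroup of the simple group $S$ being self-normalising), the conjugation action of $R$ on $\mathcal M$ factors through $R/L$; it is fixed-point-free, for if $M\in\mathcal M$ were $R$-invariant then $R\leqslant N_X(M)<X$, forcing $R=N_X(M)$ by maximality and $L=R\cap S=N_S(M)=M$, contradicting $L<M$. As $R/L$ is a non-trivial $3$-group acting without fixed points on the finite set $\mathcal M$, we obtain $3\mid|\mathcal M|$, hence $|\mathcal M|\geqslant 3$; note also that $R\cap M=L$ for every $M\in\mathcal M$, because $R\cap M\leqslant R\cap S=L\leqslant M$.

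Next I would bring in solvability. Fix $M\in\mathcal M$: since $N_X(M)\cap S=N_S(M)=M<S$, the subgroup $N_X(M)$ is proper in $X$, hence lies in some maximal subgroup $W$ of $X$. If $S\not\leqslant W$ then $M\leqslant W\cap S<S$ forces $W\cap S=M$, so $W$ is a maximal subgroup of $X$ not containing $S$, and by hypothesis $W$ --- and thus $M=W\cap S$ --- is solvable. Consequently, if some $M\in\mathcal M$ is \emph{not} solvable, then every maximal subgroup of $X$ containing $N_X(M)$ contains $S$; but $X/S$, a non-trivial cyclic $3$-group, has a unique maximal subgroup, so $X$ has a unique maximal subgroup $X_1$ containing $S$, whence $N_X(M)\leqslant X_1$. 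Such an $M$ is moreover not $X$-ordinary, since by Lemma \ref{X-ord}(4) $X$-ordinariness of $M$ would make $N_X(M)$ itself a maximal subgroup of $X$ with $N_X(M)\cap S=M<S$, again non-solvable and not containing $S$. Hence $X$ is not contained in $\ker\pi$ for the map $\pi$ of Lemma \ref{X-ord}; since $S$ stabilises each of the classes $C_i$ we have $S\leqslant\ker\pi$, so $\pi(X)$ is a non-trivial cyclic $3$-group inside $\Sym(c)$, which forces $c\geqslant 3$ (and $3\mid|\Out(S)|$, as also follows from $X/S\leqslant\Out(S)$).

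It remains to derive the contradiction, and I expect this endgame to be the main obstacle. If some $M\in\mathcal M$ is non-solvable, then by the previous paragraph $S$ is a finite simple group with $3\mid|\Out(S)|$ having a class of maximal subgroups that splits into at least three $S$-classes; by the classification of finite simple groups and the known structure of their outer automorphism groups, $S$ then lies in a short explicit list (roughly, certain groups $\PSL_3(q)$, $\PSU_3(q)$ and the triality groups $\OO_8^+(q)$ over small fields), and each of these can be checked directly against the hypothesis with the help of Lemma \ref{X-ord}. If instead every $M\in\mathcal M$ is solvable, then $S$ is generated by any two of the at least three solvable members of $\mathcal M$; combining this with the fixed-point-free $C_{3^a}$-action on $\mathcal M$, the identities $R\cap M=L$ and $|X:R|=|S:L|=|S:M|\cdot|M:L|$, and $3\mid|\Out(S)|$, one again reduces --- via the classification and explicit knowledge of the maximal subgroups of the few groups $S$ that survive, such as $\PSL_2(8)$, $\PSL_2(27)$ and $\PSL_3(4)$ --- to a finite verification. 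The structural information gathered in the first two paragraphs is, I expect, not quite sufficient on its own, so completing the proof should rely on the classification together with explicit maximal-subgroup data for the handful of groups $S$ that are not immediately excluded.
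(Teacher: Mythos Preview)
Your structural observations in the first two paragraphs are correct: the action of $R/L$ on the set $\mathcal M$ of maximal overgroups of $L$ in $S$ is indeed without fixed points, giving $3\mid|\mathcal M|$; and a non-solvable $M\in\mathcal M$ cannot be $X$-ordinary, so its $\Aut(S)$-class splits into $c\geqslant 3$ $S$-classes. However, the proof is incomplete: you yourself flag the endgame as ``the main obstacle'' and only sketch it. In particular, the reduction in the solvable case to a short list such as $\PSL_2(8)$, $\PSL_2(27)$, $\PSL_3(4)$ is not justified---the conditions $3\mid|\Out(S)|$ and ``$L$ lies below at least three solvable maximal subgroups permuted by $R/L$'' do not by themselves pin $S$ down to a handful of groups, and the identity $|S:L|=|S:M|\cdot|M:L|$ carries little information. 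Completing your route would, in effect, require reproving a substantial part of the classification of solvable maximal subgroups of almost-simple groups.

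The paper proceeds quite differently and avoids this open-ended analysis. It introduces the minimal normal subgroup $Y$ with $S\leqslant Y\leqslant X$ such that $R\cap Y$ is maximal in $Y$; since $R$ is solvable (as you noted), the pair $(Y,R\cap Y)$ is an almost-simple group together with a solvable maximal subgroup, and by the minimality of $Y$ it appears in the Li--Zhang tables \cite[Tables~14--20]{LZ}. One then only has to rule out those entries with $Y\neq S$ and $Y/S$ a cyclic $3$-group: after discarding the cases where $\Out(S)$ is a $2$-group, there remain five explicit possibilities ($\PGL_3(4)$, $\PGU_3(5)$, $\OO_8^+(2).3$, $\OO_8^+(3).3$, and $\OO_8^+(q).\langle\tau\sigma\rangle$), each of which is eliminated by exhibiting a concrete non-solvable maximal subgroup of $X$ not containing $S$, using the Atlas or \cite{BHR,Kleidman}. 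Thus the paper replaces your anticipated case analysis over all simple $S$ with a short, explicit check drawn from an existing tabulation; your approach could be made to work, but it would ultimately have to invoke the same classification data, so the detour through $\mathcal M$ buys little.
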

\begin{proof}
Let $Y$ be a normal subgroup of $X$ containing $S$, assume that $R \cap Y$ is a maximal subgroup of $Y$ and that $Y$ is of minimal order with these properties. This makes sense since $R$ is maximal in $X$. We aim to prove that $Y=S$. Both $Y/S$ and $X/S$ are (possibly trivial) cyclic $3$-groups. The pair $(Y, R \cap Y)$ is listed in \cite[Tables 14-20]{LZ} with the notation $(G_0,H_0)$ and we will refer to this table (and its notation) in our discussion. If $S$ is an alternating or sporadic group, then $X/S$ is a $2$-group hence $S=Y=X$. In the case of linear groups, $S=\PSL(n,q)$ with $q=p^f$, $p$ a prime, the inverse-transpose automorphism $\tau$ has order $2$ and, if $\sigma$ is a field automorphism and $\delta$ is a diagonal automorphism (induced by conjugation via an element of $\PGL(n,q)$), then $\sigma \tau \neq 1$ has even order dividing $2f$ and $(\delta \tau)^2$ is inner. In the symplectic or exceptional case, the graph automorphism $\mu$ defined in \cite[Remark 10.4]{LZ} is a $2$-element and we have $|^2F_4(2):{^2}F_4(2)'|=2$. This implies that, if $S$ is exceptional, then $Y/S$ is a $2$-group hence $Y=S$. We are left to examine the following list of groups. When dealing with small groups, we will use the Atlas \cite{CCNPW}. In the case of orthogonal groups, $\sigma$ is a field automorphism and $\tau$ is the triality of $\OO_8^+(q)$.

\begin{itemize}
\item  $Y \cong \PGL_3(4)$, $S\cong \PSL_3(4)$, $\Out(S) \cong D_{12}$. Since $X/S$ is a cyclic $3$-group, $X=Y$ has a nonsolvable maximal subgroup $2^4:(3\times A_5)$ supplementing the socle, contradicting the assumption.

\item $Y \cong \PGU_3(5)$, $S \cong \PSU_3(5)$, $\Out(S) \cong S_3$. Since $X/S$ is a cyclic $3$-group, $X=Y$ has a nonsolvable maximal subgroup $2S_5\times 3$ supplementing the socle, contradicting the assumption.

\item $Y \cong \OO_8^+(2).\langle \tau\rangle$, $S \cong \OO_8^+(2)$, $\Out(S)\cong S_3$. Since $X/S$ is a cyclic $3$-group, $X=Y$ has a nonsolvable maximal subgroup $\PSU_3(3):2\times 3$ supplementing the socle, contradicting the assumption.

\item $Y \cong \OO_8^+(3).\langle \tau\rangle$, $S\cong \OO_8^+(3)$, $\Out(S)\cong S_4$. Since $X/S$ is a cyclic $3$-group, $X=Y$ has a nonsolvable maximal subgroup $2^{3+6}(\PSL_3(2)\times 3)$ supplementing the socle, contradicting the assumption.

\item $Y = \OO_8^+(q).\langle \tau\sigma\rangle$, $S \cong \OO_8^+(q)$ where $q\neq 3$ and $o(\sigma)=3^k$, $k\geqslant 0$.
If $q=2$, then $\Out(S)\cong S_3$ and the field automorphism $\sigma$ is trivial,
this implies that $Y = \OO_8^+(2).\langle \tau\rangle$, which was considered above.
If $q>3$ then, by \cite[Table 8.50]{BHR} (see also \cite[Table I]{Kleidman}), $S$ has a nonsolvable ordinary maximal
subgroup $M$ in the Aschbacher class $\mathcal{C}_1$ of type
$[q^9](\frac{1}{d} \GL_2(q) \times \Omega_4^+(q)).d$, where $d=(q-1,2)$. By Lemma \ref{X-ord}, we have
$N_{X}(M)S=X$ and $N_{X}(M)$ is a nonsolvable maximal subgroup of $X$. This contradicts the assumption.
\end{itemize}
This concludes the proof.
\end{proof}

\section{Solvability of $3$-pyramidal groups} \label{KTSsolvable}

In this section, we will discuss the solvability of $3$-pyramidal groups and use the classification theorem for finite simple groups to prove that $3$-pyramidal groups are solvable. We will also use, without mentioning it in all cases, the Feit-Thompson theorem, which says that all finite groups of odd order are solvable.

We prove the result by contradiction. Assume $W$ is a nonsolvable $3$-pyramidal group of minimal order. Let $K$ be the subgroup of $W$ generated by the three involutions and let $C:=C_W(K)$. Note that both $K$ and $C$ are normal in $W$. An important observation, which we will use many times without special mention, is that, if $H$ is a proper subgroup of $W$ such that $HC=W$, then $H$ solvable. Indeed, either $|H|$ is odd, in which case $H$ is solvable by the Feit-Thompson theorem, or $|H|$ is even, in which case $H$ is $3$-pyramidal by Lemma \ref{propertiesKTS}(i), hence it is solvable by minimality of $W$. Write $|W| = 2^n \cdot d$, where $d$ is odd. By \cite[Exercise 1.6.19]{DJS} we have $n\geqslant 2$. In the light of Lemma \ref{propertiesKTS}(iv), $n$ is even, $K \cong C_2\times C_2$ and $W/C\cong A_3$, so $C$ is a maximal normal subgroup of $W$. If $H$ is a nontrivial normal subgroup of $W$ with odd order, Lemma \ref{propertiesKTS}(ii) implies that $W/H$ is $3$-pyramidal. Since $|W/H|<|W|$ and $|H|$ is odd, both $H$ and $W/H$ are solvable, this contradicts the nonsolvability of $W$, so $O(W)=\{1\}$. In particular, the minimal normal subgroups of $W$ have even order, so they contain all the involutions. This implies that the unique minimal normal subgroup of $W$ is $K$.

We divide the proof into several steps. Recall that a chief factor of $W$ is a minimal normal subgroup of a quotient of $W$. Since a minimal normal subgroup is always characteristically simple, it is isomorphic to a direct power of a simple group. As usual, $\Phi(W)$ denotes the Frattini subgroup of $W$.

\subsection{Step 1} We will prove that there exists a normal subgroup $N$ of $W$ with $\Phi(W) < N \leqslant C$ such that $W/N$ is a cyclic $3$-group, $N/\Phi(W)$ is a nonabelian chief factor of $W$, isomorphic to $S^m$ for some nonabelian simple group $S$, and $\Phi(W)$ is a $2$-group containing $K$. Writing $|\Phi(W)|=2^a$, the integer $a$ is even.
$$\xymatrix{\{1\} \ar@{-}[rr]_{2^a} & & \Phi(W) \ar@{-}[rr]_{S^m} & & N \ar@{-}[rr]_{3^b} & & C \ar@{-}[rr]_{3} & & W}$$

\begin{proof}
Let $N$ be a normal subgroup of $W$ such that $W/N$ is cyclic and $N$ is of minimal order with this property. If $W/N$ is not a $3$-group, then there exists $L/N \unlhd W/N$ such that $|W:L|$ is a prime distinct from $3$, so that $LC=W$ (being $|W:C|=3$) hence $L$ is solvable. Since $W/L$ is solvable, this contradicts the fact that $W$ is nonsolvable. So $W/N$ is a cyclic $3$-group. Moreover $N$ is contained in $C$ because otherwise $NC=W$ and then $N$ would be solvable, so $W$ would be solvable as well. Consider a normal subgroup $R$ of $W$ contained in $N$ with the property that $N/R$ is a minimal normal subgroup of $W/R$. We claim that $R=\Phi(W)$. First, note that $\Phi(W)$ is a $2$-group because otherwise there would exist a nontrivial Sylow subgroup $P$ of $\Phi(W)$ of odd order, and since $P \unlhd_c \Phi(W) \unlhd W$, being $\Phi(W)$ nilpotent, $P$ would be a nontrivial normal subgroup of $W$ of odd order, contradicting $O(W)=\{1\}$. Since $\Phi(W)/\Phi(W) \cap N \cong \Phi(W)N/N \leqslant \Phi(W/N)$ is a $3$-group, we deduce that $\Phi(W) \leqslant N$. By minimality of $N$, $W/R$ is not a cyclic $3$-group, so if $x \in W$ is a $3$-element that generates $W/C$, then $R \langle x \rangle$ is a proper subgroup of $W$, moreover $R \langle x \rangle C = W$, so $R$ is solvable. Since $R \leqslant N \leqslant C$, to prove that $R \leqslant \Phi(W)$ it is enough to prove that if $M$ is a maximal subgroups of $W$ distinct from $C$, then $R \leqslant M$. We have $MC=W$, so $M$ is solvable hence $MR \neq W$ being $W$ nonsolvable and $M,R$ solvable. Therefore $R \leqslant M$, implying that $R \leqslant \Phi(W)$. Since $W/N$ is a $3$-group and $W$ is nonsolvable, we deduce that $\Phi(W) \neq N$, in other words $\Phi(W)$ is properly contained in $N$. Since $R \leqslant \Phi(W) < N$ and $N/R$ is a minimal normal subgroup of $W/R$, we deduce that $R=\Phi(W)$. Since $\Phi(W)$ is nilpotent, the nonsolvability of $W$ implies that $N/\Phi(W)$ is a nonabelian chief factor of $W$, isomorphic to a direct power $S^m$ for some nonabelian simple group $S$. Observe that $S$ has more than $3$ involutions: indeed, no involution of $S$ is central, being $S$ simple, and if $S$ had three involutions then the kernel of the conjugation action of $S$ on them would be a normal subgroup of $S$ of index at most $3!=6$, a contradiction. We deduce that $\Phi(W) \neq \{1\}$, so $\Phi(W)$, being a $2$-group, contains at least one involution, hence it contains all $3$ of them being $\Phi(W) \unlhd W$. Writing $|\Phi(W)|=2^a$, the integer $a$ is even by Lemma \ref{propertiesKTS}(v).
\end{proof}

\subsection{Step 2} \label{corephi} Recall that a finite group $H$ is said to be primitive if it admits a maximal subgroup $M$ with trivial normal core, in other words the intersection of the conjugates of $M$ in $H$, usually denoted $M_H$, is trivial. Note that, if $M$ is any maximal subgroup of a finite group $H$, then $H/M_H$ is primitive since $M/M_H$ is a maximal subgroup of $H/M_H$ with trivial normal core. Let $M$ be a maximal subgroup of $W$ with $M \neq C$. We will prove that the normal core $M_W$ of $M$ in $W$ equals $\Phi(W)$ and consequently $W/\Phi(W)$ is a primitive group.

\begin{proof}
By Lemma \ref{propertiesKTS}(i), the maximal subgroups of $W$ distinct from $C$ are solvable by minimality of $W$, since they supplement $C$. If $M_1$ is a maximal subgroup of $W$ such that $M_1\neq C$ then $M_W \leqslant M_1$, since otherwise we would have
$M_1M_W=W$, contradicting the fact that $W$ is nonsolvable, being $M_1$ and $M_W$ solvable. So
every maximal subgroup of $W$ different from $C$ contains $M_W$, implying that $\Phi(W) = M_W\cap C$. To conclude the proof, we need to prove that $M_W \leqslant C$. If this is not the case, then $C M_W = W$.
Note that $W/M_W=CM_W/M_W\cong C/\Phi(W)$ has a subgroup $N/\Phi(W)\cong S^m$. Since every nonabelian simple group has order divisible by $4$, there exist two subgroups $A, B\leqslant W$ containing $M_W$
such that $A\leqslant B$ and $|B:A|=|A:M_W|=2$. Since $A$ and $B$ contain $M_W$, $AC=BC=W$, hence
$A$ and $B$ are both $3$-pyramidal by Lemma \ref{propertiesKTS}(i). Since $K \leqslant \Phi(W) \leqslant M_W \leqslant A$ and $|K|=4$, this contradicts Lemma \ref{propertiesKTS}(iv).
\end{proof}

\subsection{Step 3} \label{supplementsocle} Let $G:=W/\Phi(W)$, it is a primitive group whose socle is $N/\Phi(W) \cong S^m$, it is the unique minimal normal subgroup of $G$. Let $T_1$ be the first direct factor of $N/\Phi(W) \cong S^m$ and let $G_1 := N_G(T_1)/C_G(T_1)$. Then $G_1$ is an almost-simple group with socle $T_1 C_G(T_1)/C_G(T_1) \cong S$ and we will identify $G_1$ with a subgroup of $\Aut(S)$ containing $S$. We say that a finite group is of \textit{even type} if, in the prime factorization of its order, the prime $2$ appears with even multiplicity. Lemma \ref{propertiesKTS}(iv-v) implies that $G$ is of even type. We will prove that, if $U$ is a maximal subgroup of $G_1$ such that $US=G_1$, then $U$ is a solvable group of even type.

\begin{proof}
We refer to \cite[Chapter 1]{BE} for the general properties of primitive groups. Since $N/\Phi(W)$ is a nonabelian minimal normal subgroup of $G$ and $W/N$ is cyclic, $G$ is a primitive group of type $2$, meaning that it admits a unique minimal normal subgroup which is nonabelian. Since $W/N$ is a cyclic $3$-group, we have an embedding of $G$ in the standard wreath product $G_1 \wr P$ where $P$ is a cyclic $3$-subgroup of $\Sym(m)$, the image of the permutation representation of $G$ acting transitively by conjugation on the $m$ minimal normal subgroups of its socle (see \cite[Remark 1.1.40.13]{BE}). In particular, $m$ is a power of $3$.

Let $V := (U \cap S)^m \leqslant S^m = \soc(G)$. We claim that $N_G(V) \soc(G) = G$. Let $g = (x_1,\ldots, x_m) \gamma\in G$, where $x_i \in G_1$ for all $i$ and $\gamma\in P$. Since $US=G_1$, there exist $s_i \in S$, $t_i\in U$ such that $x_i = s_it_i$ for all $i=1,\ldots,m$, and setting $n:=(s_1,\ldots, s_m) \in \soc(G)$ we have $h := (t_1,\ldots, t_m)\gamma = n^{-1} g \in G$, therefore
$$V^h = ((U \cap S)^{t_1} \times \cdots \times (U \cap S)^{t_m})^\gamma = V^\gamma = V.$$
We deduce that $h=n^{-1}g\in N_G(V)$ and since $n \in \soc(G)$ the claim follows.

We claim that $N_G(V) \neq G$. If this was not the case, then $V$ would be normal in $G$ and being $N$ a minimal normal subgroup of $G$, and $U \cap S \neq S$ being $US=G_1$, since $S$ is simple we deduce that $U \cap S=\{1\}$ and $G_1/S \cong U$. It follows that $U$ is isomorphic to a section of $G/\soc(G) \cong W/N$, which is cyclic, therefore $U$ is a cyclic maximal subgroup of the almost-simple group $G_1$, contradicting Herstein's theorem, which says that a nonsolvable finite group cannot have abelian maximal subgroups (see \cite[Theorem 5.53]{AM}). So the claim follows.

Since $\Phi(W)$ is a nontrivial $2$-group, the preimage of $N_G(V)$ in $W$ is a proper subgroup of $W$ of even order and it is not contained in $C$, so it is $3$-pyramidal by Lemma \ref{propertiesKTS}(i) hence it is solvable by minimality of $W$. So $U \cap S$ is solvable. Since $U/U \cap S \cong US/S = G_1/S$ is a cyclic $3$-group, this implies that $U$ is solvable.

Summarizing, we have $N_G(V) \soc(G) = G$, $N_G(V) \neq G$, $G_1/S$ is a cyclic $3$-group and $G_1$ has only one nonsolvable maximal subgroup, which contains $S$. By Lemma \ref{G0=S}, $U \cap S$ is a maximal subgroup of $S$. We claim that $N_G(V) \cap \soc(G) = V$. To show this, let $L$ be a maximal subgroup of $G$ containing $N_G(V)$. Since $V \leqslant N_G(V) \leqslant L$ and $N_G(V) \soc(G)=G$, the maximal subgroup $L$ is either of product type or of diagonal type (the two types described in \cite[Remark 1.1.40.19]{BE}). But since $L \soc(G)=G$, $L$ is solvable, and hence $L$ must be of product type.
Thus we can write $L \cap \soc(G) = H_1 \times \cdots\times H_m$ with $H_i < S$ for all $i$ and the $H_i$'s are pairwise isomorphic. We have $L \cap \soc(G) \neq \{1\}$ since otherwise $L$ would be isomorphic to $G/\soc(G)$, so it would be a cyclic maximal subgroup of $G$. Therefore $H_i \neq \{1\}$ for all $i$. Since $U \cap S$ is a maximal subgroup of $S$ and $V \leqslant N_G(V) \cap \soc(G) \leqslant L \cap \soc(G)$, we deduce that $H_i = U \cap S$ for all $i$, hence $L \cap \soc(G) = V$. On the other hand $L \leqslant N_G(L \cap \soc(G)) = N_G(V)$ hence $L = N_G(V)$ by maximality of $L$, therefore $N_G(V) \cap \soc(G) = V$. 

Since $N_G(V) \soc(G) = G$ and $U/U \cap S \cong US/S = G_1/S \cong C_{3^s}$ is a cyclic $3$-group, writing $|U|=2^d \cdot r$ with $r$ odd and $G/\soc(G) \cong C_{3^t}$, we have 
$$|N_G(V)| = |G:\soc(G)| \cdot |N_G(V) \cap \soc(G)| = 3^t \cdot |U \cap S|^m = 3^{t-sm} \cdot 2^{dm} \cdot r^m.$$ 
Since the preimage of $N_G(V)$ in $W$ is $3$-pyramidal by Lemma \ref{propertiesKTS}(i), and $\Phi(W)$ is of even type by Lemma \ref{propertiesKTS}(v), $N_G(V)$ is of even type by Lemma \ref{propertiesKTS}(iv), in other words $dm$ is even. Since $m$ is a power of $3$, we conclude that $d$ is even.
\end{proof}

\subsection{Step 4} \label{G1=S} $G_1 = S$.

\begin{proof}
Let $M$ be any maximal subgroup of $W$ different from $C$, so that $M$ is solvable, $\Phi(W)=M_W$ and $H := M/\Phi(W)$ is a point stabilizer of the primitive group $G$. Since $H$ is solvable, it is of product type, so $H$ is conjugate in $G$ to $N_G((H_1 \cap S)^m)$ for a suitable maximal subgroup $H_1$ of $G_1$ such that $H_1S=G_1$. By \cite[Theorem 1.1]{LZ}, $G_1$ has a normal subgroup $G_0$ which is minimal such that $H_0 := H_1 \cap G_0$ is maximal in $G_0$ and the pair $(G_0,H_0)$ is explicitly listed in \cite[Tables 14-20]{LZ} up to conjugacy. Lemma \ref{G0=S} implies that $G_0=S$.

We have $S \leqslant G_1\leqslant \Aut(S)$ and $G_1/S$ is a cyclic $3$-group. In our discussion, we will consider suitable nonsolvable maximal subgroups $R$ of $S$ and we will apply Lemma \ref{X-ord} with $R$ in place of $H$ and $G_1$ in place of $X$. Since $G_1/S$ is a cyclic $3$-group, if $c \leqslant 2$ then $R$ is $G_1$-ordinary and hence $N_{G_1}(R)$ is a nonsolvable maximal subgroup of $G_1$ supplementing $S$, this contradicts Step 3. Note that our ``$c$'' is the same as in \cite[Paragraph 3.2]{KL} and \cite[Chapter 8]{BHR}.

Assume $G_1 \neq S$ by contradiction. In the following discussion we will use \cite[Tables 14-20]{LZ} recalling that $G_0=S$ in our situation. Since $G_1/S$ is a $3$-group, if $\Out(S)$ is a $2$-group we immediately have a contradiction. This is the case if $S$ is an alternating or sporadic group. If $S$ is a symplectic group, then by \cite[Table 17]{LZ} again $\Out(S)$ is a $2$-group, a contradiction. In the following discussion, the existence of a nonsolvable $G_1$-ordinary maximal subgroup of $S$ gives a contradiction by Lemma \ref{X-ord}, as explained in the previous paragraph.

If $S$ is a unitary group, $S \cong \PSU_n(q)$ where $q=p^f$ and
$n\geqslant 4$, then by \cite[Tables 8.10, 8.20, 8.26, 8.37, 8.46, 8.56, 8.62, 8.72, 8.78]{BHR} and \cite[Table 3.5.B]{KL}, $S$ has an ordinary parabolic maximal subgroup of type $[q^{2n-3}]:\SU_{n-2}(q):(q^2-1)/d$, with $d=(n,q+1)$, which is nonsolvable unless $(n,q) \in \{(4,2),(4,3),(5,2)\}$, in which case $\Out(S)$ is a $2$-group.
%\textcolor{blue}{($SU_2(2)\cong S_3$, $\SU_2(3)\cong \SL_2(3)$ and $\SU_3(2)$ are solvable groups, so we need to consider $\PSU_4(2)$, $\PSU_4(3)$ and $\PSU_5(2)$ separately. Note that $\Out(\PSU_4(3))\cong D_8$ and $\Out(\PSU_4(2))\cong \Out(\PSU_5(2))\cong C_2$ are $2$-groups)}.
Assume $n=3$, so that $q \geqslant 4$ since $\PSU_3(2)$ is not a simple group and $\Out(\PSU_3(3)) \cong C_2$.
By \cite[Table 8.5]{BHR} $S$ has a nonsolvable ordinary maximal subgroup of type $\GU_2(q)/d$,
where $d=(q+1,3)$. If $n=2$ then $S \cong \PSL_2(q)$ will be considered below.

If $S$ is an orthogonal group, by \cite[Table 19]{LZ} $S$ is isomorphic to $\OO_7(3)$, $\OO_8^+(q)$, $\OO_{12}^+(3)$ or $\OO_{16}^+(3)$. Since $\Out(S)$ is a $2$-group unless $S\cong \OO_8^+(q)$, we assume this is the case. If $q=2$ then $\Out(S)\cong S_3$ and if $q=3$ then $\Out(S) \cong S_4$, hence $G_1 = S.\langle \tau\rangle$ where $\tau$ is the triality automorphism in these two cases, and $G_1$ has a nonsolvable maximal subgroup of type $\PSU_3(3):2\times 3$ if $q=2$ and $G_2(3)\times 3$ if $q=3$, not containing $S$ (see \cite{CCNPW}), contradicting Step 3.
If $q \geqslant 4$, then $S$ has a nonsolvable ordinary maximal subgroup
of type $[q^9](\frac{1}{d}\GL_2(q)\times\Omega_4^+(q)).d$ where $d=(q-1,2)$ by \cite[Table 8.50]{BHR} and \cite[Table I]{Kleidman}.
%\textcolor{blue}{This formula should be $[q^9](\frac{1}{d}\GL_2(q)\times\Omega_4^+(q)).d/d$. Note that $Z(\Omega_8^+(q))\leqslant Z_2$ and $\Omega_8^+(q)/Z(\Omega_8^+(q))\cong \OO_8^+(q)$. \cite[Table 8.50]{BHR} shows that $M:=[q^9](\frac{1}{d}\GL_2(q)\times\Omega_4^+(q)).d$ is a maximal subgroup of $\Omega_8^+(q)$. We claim that $Z(\Omega_8^+(q))\leqslant M$. Because otherwise $Z_2.M=\Omega_8^+(q)$ by the maximality of $M$ if $Z(\Omega_8^+(q))\cong Z_2$. Obviously, $Z_2\cap M=1$, this implies that $\Omega_8^+(q)\cong Z_2\times M$ and $\Omega_8^+(q)/Z_2\cong M=\OO_8^+(q))$, a contradiction. If $Z(\Omega_8^+(q))=1$ the result is trivial. Thus $Z(\Omega_8^+(q))\leqslant M$. By correspondence theorem $M/Z(\Omega_8^+(q))$ is a maximal subgroup of $\Omega_8^+(q)/Z(\Omega_8^+(q)) \cong \OO_8^+(q)$.}

Assume $S$ is a linear group, $S\cong \PSL_n(q)$, where $n \geqslant 2$ and $q=p^f$.
If $n\geqslant 3$ then $S$ has a maximal parabolic subgroup
$P_1$ of type $[q^{n-1}].\SL_{n-1}(q).(q-1)/d$, $d=(q-1, n)$. By
\cite[Proposition 4.1.17]{KL} and Lemma \ref{X-ord}, $P_1$ is $G_1$-ordinary. It is nonsolvable unless $n=3$ and $q \leqslant 3$, in which case $\Out(S) \cong C_2$ and $G_1=S$. Assume $n=2$, so that $S \cong \PSL_2(q)$. Since $\Out(S) \cong C_{(2, q-1)}\times C_f$ and $G_1/S$ is a cyclic $3$-group, $3$ divides $f$. By \cite[Table 8.1]{BHR}, $S$ has an ordinary maximal subgroup of type $\PSL_2(q_0)$, where $q_0=p^{f/3}=\sqrt[3]{q}$, if $q_0 \neq 2$, and this maximal subgroup is nonsolvable if $q_0 \geqslant 4$. If this is not the case, then $q_0 \leqslant 3$ i.e. $q \in \{8,27\}$.
Thus $G_1/S \cong C_3$ and $G_1$ has a maximal subgroup of type $7:6$ or $13:6$ supplementing $S$ (see \cite{CCNPW}), contradicting Step 3, since such subgroups are not of even type.

We now consider exceptional groups. The group $S$ appears in the lines of \cite[Table 20]{LZ} in which $G_0$ is a simple group. We first suppose that $S$ is isomorphic to a Suzuki group $\Sz(2^m) = {^2}B_2(2^m)$, with $m$ odd. If $m$ is not prime, then we can write $m=ab$ with $b$ prime and $a>2$, so by \cite[Table 8.16]{BHR}, $S$ has
a nonsolvable ordinary maximal subgroup isomorphic to $\Sz(2^a)$.
If $m$ is a prime then, since $3$ divides $|\Out(S)|=m$,
we have $m=3$, $S \cong \Sz(8)$ and $G_1 = S.C_3$ has a maximal subgroup of type $7:6$ supplementing $S$ (see \cite{CCNPW}). This contradicts Step 3, since such subgroup is not of even type. We use the notation $\Ree(q)={^2}G_2(q)$, as in \cite{LZ}. 
Observe that $\Ree(3)'\cong \PSL_2(8)$ was considered in the discussion of linear groups and
$\Out(S)\cong C_2$ if $S\cong G_2(2)'$, $G_2(3)$, ${}^2F_4(2)'$ or $F_4(2)$, so we obtain a contradiction in these cases. By \cite[Main Theorem, Tables 5.1, 5.2]{MJG}, taking $L=S$ and $X=G_1$, if $S$ is one of the exceptional groups $\Ree({q})$ for $q\geqslant 27$, ${^2}F_4(q)$ for $q\geqslant 8$, ${^3}D_4(q)$, ${^2}E_6(q)$, $E_6(q)$ and $E_8(q)$, then $G_1$ has a maximal subgroup $M=N_X(D)$ which is nonsolvable because $N_{G_{\sigma}}(D)$ is nonsolvable and its derived subgroup is contained in $M$, indeed $(G_{\sigma})' = L \leqslant X$
and hence $(N_{G_{\sigma}}(D))' \leqslant N_{G_{\sigma}}(D)\cap X\leqslant N_X(D)=M$. Moreover, $M$ does not have $S$ as a composition factor, therefore $MS=G_1$. This gives a contradiction.
\end{proof}

\subsection{Step 5} We conclude that $G_1=S$. Since $G$ is a subgroup of $S\wr P$ containing $S^m = \soc(G)$, with $G$ projecting
surjectively onto the transitive cyclic group $P \leqslant \Sym(m)$, we deduce that $G$ is isomorphic to the standard wreath product $S\wr P$, where $P$ acts on $S^m$ by permuting the coordinates. Consider
$$\Delta=\{(s, s,\cdots, s)\ :\ s\in S\} \leqslant S^m, \hspace{1cm} H:=N_G(\Delta).$$
It is clear that $P \leqslant H$, hence $\soc(G)H=G$. Since $G$ has a normal subgroup of index $3$, we have $m\geqslant 2$, therefore $\Delta$ is not normal in $G$, being $\soc(G)$ a minimal normal subgroup of $G$, therefore $H \neq G$. Since $\Delta \cong S$ is nonsolvable, we obtain that the preimage of $H$ in $W$ is a nonsolvable proper subgroup of $W$, of even order and supplementing $C$, so it is $3$-pyramidal by Lemma \ref{propertiesKTS}(i). This contradicts the minimality of $W$.

This concludes the proof of the fact that $3$-pyramidal groups are solvable. \hfill$\square$

\section{Proof of the Main Theorem} \label{proofmain}

In this section, we will study the structure of $3$-pyramidal groups and prove our Main Theorem.  
Recall that, if $G$ is a $p$-solvable group, meaning that it admits a series of normal subgroups $\{1\} = V_0 < \ldots < V_n = G$
such that each factor $V_{i+1}/V_i$ is either a $p$-group or a $p'$-group, the $p$-length of $G$ is the length $l$ of the upper $p$-series
$$\{1\} = P_0 \leqslant N_0 < P_1 < N_1 < P_2 < \ldots < P_l \leqslant N_l = G$$
where $N_k/P_k$ is the largest normal $p'$-subgroup of
$G/P_k$ and $P_{k+1}/N_k$ the largest normal $p$-subgroup of $G/N_k$, for $k=0,\ldots,l-1$.

Recall that a group $G$ is called homocyclic if it is a direct product of isomorphic cyclic groups, and $G$ is called a Suzuki $2$-group if $G$ is a nonabelian $2$-group with more than one involution and there exists a solvable subgroup of $\Aut(G)$ which permutes the set of involutions of $G$ transitively. Since $3$-pyramidal groups are solvable, Thompson's result \cite[Theorem 8.6 of Chapter IX]{HB} is now very useful for us. It states the following.

\begin{thm}[Thompson] \label{Thompson}
%\label{4.1}
Suppose that $G$ is a solvable group of even order and that the Sylow $2$-subgroup of $G$ contains more than one
involution. Suppose that all the involutions in $G$ are conjugate. Then the $2$-length of
$G$ is $1$ and the Sylow $2$-subgroups of $G$ are either homocyclic or Suzuki $2$-groups.
\end{thm}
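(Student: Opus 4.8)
The plan is to reproduce the skeleton of Thompson's original argument: first reduce to the case $O_{2'}(G)=1$, then observe that under this reduction every involution of $G$ is forced inside $O_2(G)$, which together with the transitivity hypothesis immediately yields the stated shape of the Sylow $2$-subgroup, and finally appeal to Hall--Higman-type machinery to pin down the $2$-length.

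For the reduction, let $D=O_{2'}(G)$ be the largest normal subgroup of odd order. Since $|D|$ is odd, the Sylow $2$-subgroups of $G/D$ are isomorphic to those of $G$, and a Schur--Zassenhaus argument shows that the involutions of $G/D$ are exactly the images of the involutions of $G$ with the two conjugacy relations matching; moreover $\ell_2(G/D)=\ell_2(G)$. So it suffices to treat the case $O_{2'}(G)=1$, which I assume from now on. Then, $G$ being solvable, $F(G)=O_2(G)=:Q$, and the standard fact that $C_G(F(G))\le F(G)$ for solvable $G$ gives $C_G(Q)\le Q$; in particular $Q\ne\{1\}$, so $Q$ contains an involution. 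Because $Q\trianglelefteq G$, membership in $Q$ is invariant under conjugation, hence the set of involutions of $G$ lies either entirely inside $Q$ or entirely outside it; as it meets $Q$, \emph{every} involution of $G$ lies in $Q$.

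Next, $G/C_G(Q)$ is a solvable subgroup of $\Aut(Q)$, and since $G$ is transitive on its involutions and all of them lie in $Q$, this solvable group acts transitively on the set of involutions of $Q$, which has more than one element. If $Q$ is nonabelian this is precisely the definition of a Suzuki $2$-group. If $Q$ is abelian, write $Q=\prod_i C_{2^{a_i}}$; the subgroups $2^jQ$ are characteristic, so $\Aut(Q)$ preserves the ``height'' of an involution, and transitivity forces all the $a_i$ equal, i.e.\ $Q$ is homocyclic. Thus $Q$ has exactly the asserted structure, and it remains only to show that $Q$ is a Sylow $2$-subgroup of $G$, equivalently that $\ell_2(G)=1$.

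This last point is the technical heart of the theorem and the step I expect to be the main obstacle. I would argue by contradiction: if $\ell_2(G)\ge 2$, then tracing the upper $2$-series produces normal subgroups $Q<E<S\trianglelefteq G$ with $E/Q$ a nontrivial group of odd order and $S/E$ a nontrivial $2$-group, and one can extract a $2$-element $s\in S$, $s\notin E$, with $s^2\in E$, acting by conjugation simultaneously on $Q$ and on a nontrivial odd-order section of $E/Q$. The Hall--Higman theorems on $p$-length (see the development in \cite[Chapter IX]{HB}), applied to this configuration together with the very restricted module structure forced by $Q$ being homocyclic or a Suzuki $2$-group (whose structure is described in \cite[Section 7 of Chapter VIII]{HB}) and by the $G$-conjugacy of all involutions, should make such an action impossible. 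I would carry this out separately in the homocyclic and the Suzuki cases, in each producing either an involution of $G$ lying outside $Q$ --- contradicting the previous paragraph --- or a forbidden representation of the intervening odd-order group. Granting $\ell_2(G)=1$, the Sylow $2$-subgroup of $G$ coincides with $Q$, and the theorem follows.
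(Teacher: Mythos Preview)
The paper does not actually prove this theorem: it is quoted verbatim as a result of Thompson and attributed to \cite[Theorem~8.6 of Chapter~IX]{HB}, with no argument given. So there is no ``paper's own proof'' to compare your attempt against; the authors simply invoke the theorem as a black box in Section~\ref{proofmain}.

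As for your sketch on its own merits: the reduction to $O_{2'}(G)=1$, the observation that then $Q:=O_2(G)=F(G)$ contains every involution, and the dichotomy ``$Q$ abelian $\Rightarrow$ homocyclic, $Q$ nonabelian $\Rightarrow$ Suzuki $2$-group'' are all correct and are indeed the opening moves of the proof in Huppert--Blackburn. The genuine gap is exactly where you flag it: showing $\ell_2(G)=1$. Your paragraph there is not a proof but a hope (``should make such an action impossible'', ``I would carry this out separately''). In the actual argument this step is substantial and does not reduce to a routine Hall--Higman citation; one has to exploit the specific structure of $Q$ (in the Suzuki case, the detailed classification of \cite[VIII.7]{HB}) together with the action of an odd-order Hall subgroup to rule out a $2$-element above $E$ whose square lies in $E$. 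Without that analysis your argument establishes only that $O_2(G)$ is homocyclic or Suzuki, which is weaker than the stated conclusion about the Sylow $2$-subgroup. If you intend to include a self-contained proof rather than a citation, that last step needs to be written out in full.
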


Let $G$ be a $3$-pyramidal group and write $|G| = 2^n \cdot d$, where $d$ is odd. Let $K$ be the subgroup of $G$ generated by the three involutions $i,j,k$ and let $C:=C_G(K)$. If $n=1$ then the result follows from Lemma \ref{propertiesKTS}(iii). Now assume that $n\geqslant 2$.
Lemma \ref{propertiesKTS}(iv) implies that $n$ is even, $K\cong C_2\times C_2$ and $G/C\cong A_3$. Since $K$ and $O(G)$ are normal subgroups of $G$ of coprime orders, $K\cap O(G) = \{1\}$ hence $O(G) \leqslant C$. 
By Lemma \ref{propertiesKTS}(ii) we may assume that $O(G)=\{1\}$. By Section \ref{KTSsolvable}, $G$ is solvable and Theorem \ref{Thompson} applies, so the $2$-length of $G$ is $1$. Since $O(G)=\{1\}$, this means that the Sylow $2$-subgroup $N$ of $G$ is normal hence it has a complement $A$ in $G$ by the Schur-Zassenhaus Theorem, i.e., $G=N\rtimes A$. Since $C_A(N)$ is a normal subgroup of $G$ of odd order, it is trivial, hence $C_G(N) = Z(N)$ is a $2$-group and $A$ is isomorphic to a subgroup of $\Aut(N)$.
In the light of Theorem \ref{Thompson}, $N$ is either homocyclic or a Suzuki $2$-group. 

Assume that $N$ is a homocyclic group, i.e. a direct product of pairwise isomorphic cyclic groups.
Since $N$ has three involutions, $N\cong C_{2^m}\times C_{2^m}$ for some positive integer $m$. It is easy to see that $\Aut(N)$ is isomorphic to the group of $2 \times 2$ matrices with coefficients in $\mathbb{Z}/2^m\mathbb{Z}$ and invertible determinant, therefore $|\Aut(N)|=3 \cdot 2^{4m-3}$.
By Sylow's theorem, $A$ is conjugate in $\Aut(N)$ to $\langle \gamma \rangle$ where $\gamma: N \to N$ is defined by $(a,b) \mapsto (b, (ab)^{-1})$. Therefore $G \cong N \rtimes \langle \gamma \rangle$.

We next assume that $N$ is a Suzuki $2$-group. Since $N$ has three involutions, we have $\Phi(N)=N'=Z(N) \cong C_2^2$ by \cite[Lemma 7.8, Theorem 7.9 of Chapter VIII]{HB}. A simple inspection using \cite{GAP} shows that no group of order $16$ has these properties, therefore \cite{GH} implies that $|N|=64$ and $N$ is the unique Suzuki $2$-group of order $64$, SmallGroup(64, 245), $|\Aut(N)|=3\cdot 5\cdot2^{10}$ and $G$ is isomorphic to $N \rtimes A$ where $A$ is a subgroup of $\Aut(N)$ of order $3$ or $15$. Since the subgroups of $\Aut(N)$ of order $3$ or $15$ are Hall subgroups and $\Aut(N)$ is solvable, they form a unique conjugacy class of subgroups, so $G$ is determined completely up to isomorphism.

Conversely, it is not difficult to see that $G$ is $3$-pyramidal if
$G \cong S_3 \times H$ where $H$ is a group of odd order. Assume now that $4$ divides $|G|$, let $O:=O(G)$ and assume that $O$ centralizes all involutions of $G$ and that $G/O$ is one of the groups in cases (2) and (3). Since $4$ divides $|G/O|$ and $G/O$ is $3$-pyramidal, it has exactly $3$ involutions $iO$, $jO$ and $kO$. 
The union $iO \cup jO \cup kO$ contains all involutions of $G$. Since $O$ is a group of odd order centralizing all involutions of $G$, each of $iO$, $jO$, $kO$ contains a unique involution,
therefore $G$ has precisely three involutions, which we may assume to be $i$, $j$, $k$. Since $iO$, $jO$ and $kO$ are conjugate in $G/O$, there exists $g \in G$ such that $i^g O = (iO)^{gO} = jO$. Since $j$ is the unique involution in $jO$, we obtain that $i^g=j$. The same argument shows that $j$ and $k$ are conjugate in $G$, so $G$ is $3$-pyramidal.
\hfill$\square$


\begin{thebibliography}{99}

\bibitem{SMG} S. Bonvicini, M. Buratti, M. Garonzi, G. Rinaldi, and T. Traetta. \textit{The first families of highly symmetric kirkman triple systems whose orders fill a congruence class.} Designs, Codes and Cryptography, 89 (2021) 2725--2757.

\bibitem{HB} { B. Huppert, N. Blackburn.\textit{ Finite groups II.} Berlin: Springer-Verlag, 1982.}

\bibitem{shaw} {D. L. Shaw. \textit{The sylow 2-subgroups of finite, soluble groups with a single class of involutions.} Journal of Algebra, 16:14--26, 1970.}

\bibitem{LZ} {C. H. Li, H. Zhang.\textit{ The finite primitive groups with soluble stabilizers, and the edge-primitive s-arc transitive graphs.} Proceedings of the London Mathematical Society, (3) 103 (2011) 411--472.}

\bibitem{CCNPW} {J. H. Conway, R. T. Curtis, S. P. Norton, R. A. Parker and R. A. Wilson. \textit{Atlas of finite groups.} Oxford University Press, London, 1985.}

\bibitem{BHR} {J. Bray, D. Holt, and  C. Roney-Dougal.\textit{The maximal subgroups of the low-dimensional finite classical groups.} Cambridge University Press, London, 2013.}

\bibitem{Kleidman} P. B. Kleidman. \textit{The maximal subgroups of the finite 8-dimensional orthogonal groups $\OO_8(q)$ and of their automorphism groups.} Journal of Algebra, 110, 1987.

\bibitem{DJS} {D. J. S. Robinson.\textit{ A course in the theory of groups.} Berlin: Springer, 1982.}

\bibitem{BE} {A. Ballester-Bolinches and L. M. Ezquerro. \textit{Classes of Finite Groups.} Springer, 2006.}

\bibitem{AM} {A. Machi. \textit{ Groups: An Introduction to Ideas and Methods of the Theory of Groups.}  Springer, 2012.}

\bibitem{KL} {P. B. Kleidman and M. W. Liebeck.\textit{The subgroup structure of the finite classical groups.} Cambridge University Press, London, 1990.}

\bibitem{MJG} {M. W. Liebeck, J. Saxl, G. M. Seitz.\textit{ Subgroups of maximal rank in finite exceptional groups of Lie type.} Proceedings of the London Mathematical Society, 65 (1992) 297-325.}

\bibitem{GAP} {The GAP Group. \textit{GAP-Groups, Algorithms, and Programming.} Version 4.7.8.}

\bibitem{GH} {G. Higman, \textit{ Suzuki 2-groups.} Illinois Journal of Mathematics, 7 (1963) 79-96.}
\end{thebibliography}
\end{document}